\documentclass[12pt]{article}

\usepackage[margin=1in]{geometry}

\usepackage{amsmath, amssymb, amsthm, amssymb}
\usepackage{amsfonts}
\usepackage{graphicx}
\usepackage[frame,ps,matrix,arrow,curve,rotate]{xy}
\usepackage{enumerate}
\usepackage{hyperref}
\usepackage{siunitx}

\usepackage{tikz-cd}

\usepackage{blkarray}



\newtheorem{theorem}{Theorem}[section]
\newtheorem{thm}[theorem]{Theorem}
\newtheorem{lemma}[theorem]{Lemma}

\newtheorem{corollary}[theorem]{Corollary}

\newcounter{claims}[theorem]
\newtheorem{claim}[claims]{Claim}

\theoremstyle{definition}

\newtheorem{defn}[theorem]{Definition}
\newtheorem{example}[theorem]{Example}

\newtheorem{question}[theorem]{Question}

\theoremstyle{remark}

\newcommand{\mc}[1]{\mathcal{#1}}

\newcommand{\la}{\langle}
\newcommand{\ra}{\rangle}

\DeclareMathOperator{\ran}{ran}

\DeclareMathOperator{\dom}{dom}

\makeatletter
\newcommand\mathcircled[1]{%
  \mathpalette\@mathcircled{#1}%
}
\newcommand\@mathcircled[2]{%
  \tikz[baseline=(math.base)] \node[draw,circle,inner sep=1pt] (math) {$\m@th#1#2$};%
}
\makeatother

\begin{document}

\title{Two results on complexities of decision problems of groups}
\author{Uri Andrews\thanks{The authors would like to thank the CSU Desert Studies Center for supporting this project.}, Matthew Harrison-Trainor\footnotemark[1] \thanks{The second author acknowledges support from the National Science Foundation under Grant No.\ \mbox{DMS-2153823}.}, Turbo Ho\footnotemark[1] \thanks{The third author acknowledges support from the National Science Foundation under Grant No.\ \mbox{DMS-2054558}.}}

\maketitle

\begin{abstract}
We answer two questions on the complexities of decision problems of groups, each related to a classical result.  First, C.\ Miller characterized the complexity of the isomorphism problem for finitely presented groups in 1971. We do the same for the isomorphism problem for recursively presented groups.  Second, the fact that every Turing degree appears as the degree of the word problem of a finitely presented group is shown independently by multiple people in the 1960s. We answer the analogous question for degrees of ceers instead of Turing degrees. We show that the set of ceers which are computably equivalent to a finitely presented group is $\Sigma^0_3$-complete,  which is the maximal possible complexity.
\end{abstract}

\section{Introduction}

The most famous decision problems in group theory are the three problems proposed by Dehn \cite{De11}: the word problem, the conjugacy problem, and the isomorphism problem. These all turned out to be incomputable. In this paper, we focus on the first and third problems. Novikov \cite{No55}, and independently Boone \cite{Bo57}, showed that there are finitely presented groups with unsolvable word problem. More generally, for any c.e.\ degree there is a finitely presented group whose word problem is of that degree \cite{Fr62, Cl64, Bo66, Bo67}. Adyan \cite{Ad55} showed that the isomorphism problem is incomputable. 

Though much work on decision problems have used the Turing degrees as the ultimate measure of complexity, we will examine a more refined measure.
We consider these decision problems as equivalence relations and we use computable reduction as a way to compare complexity of equivalence relations. If $E$ and $F$ are equivalence relations on $\omega$, we say $E$ \emph{computably reduces to $F$, denoted $E\leq F$,} if there is a total computable function $\Phi$ so that $i\mathrel{E} j$ if and only if $\Phi(i)\mathrel{F} \Phi(j)$.

Decision problems in group theory are generally considered on three strata of groups, based on the complexity of their presentations. The \emph{recursively presented} groups are those of the form $\langle X\mid R\rangle $ where $X$ is a computable set of generators and $R$ is a computable (equivalently c.e.) set of relators. The \emph{finitely generated} recursively presented groups require $X$ to also be finite. The most restrictive class is the \emph{finitely presented} groups, which require both $X$ and $R$ to be finite.

\subsection{The Isomorphism Problem}

Our first main result is on the isomorphism problem for finitely generated groups. C.\ Miller 
 \cite{CharlesFMillerIII} showed that the isomorphism problem for finitely presented groups is as hard as possible. In particular, he showed that the isomorphism problem for finitely presented groups is a complete c.e.\ equivalence relation (ceer).\footnote{Miller did not use this terminology as this was before ceers were a topic of study in their own right.} That is, it is a $\Sigma^0_1$ equivalence relation and every other $\Sigma^0_1$-equivalence relation computably reduces to it. 

In this paper, we show that the isomorphism problem for recursively presented or finitely generated recursively presented groups are as hard as possible, namely they are $\Sigma^0_3$-complete equivalence relations.
In fact, we show the following stronger result:

\begin{thm}
	The isomorphism problem for $6$-generated recursively presented groups is a $\Sigma^0_3$-complete equivalence relation, i.e., every other $\Sigma^0_3$ equivalence relation computably reduces to it.
\end{thm}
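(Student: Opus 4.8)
The plan is to establish $\Sigma^0_3$-completeness in two moves: an upper bound (the isomorphism problem for recursively presented groups is $\Sigma^0_3$) and a hardness reduction (every $\Sigma^0_3$ equivalence relation computably reduces to the isomorphism problem restricted to $6$-generated recursively presented groups). The upper bound should be routine: given indices for two recursively presented groups $G_i = \langle X_i \mid R_i \rangle$, the statement ``$G_i \cong G_j$'' unwinds, via a Higman-style or direct analysis, to an arithmetical statement of the form ``there exists a (c.e., hence $\Sigma^0_1$-coded) map that is a homomorphism in both directions whose composites are the identity,'' and checking that a finite word equals the identity in a recursively presented group is a $\Sigma^0_1$ question. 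Quantifying over the (countably many, uniformly listed) candidate partial maps and requiring them to be mutually inverse homomorphisms lands at $\Sigma^0_3$; one should double check the exact quantifier count, but the three-quantifier bound is what one expects, and it matches the claimed optimality.

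The substance is the hardness direction. Fix an arbitrary $\Sigma^0_3$ equivalence relation $E$ on $\omega$. The strategy I would pursue is to reduce through an intermediate, well-understood $\Sigma^0_3$-complete problem and then code that into group isomorphism. A natural choice is to use the fact that $E$, being $\Sigma^0_3$, can be presented so that $i \mathrel{E} j$ iff a certain uniformly-given sequence of $\Pi^0_2$ (or $\Sigma^0_2$) conditions holds; equivalently, one can arrange a uniformly computable family of c.e. sets or of ``$\forall\exists$'' predicates that encodes $E$. Then I would build, uniformly in $i$, a finitely generated recursively presented group $G_i$ whose isomorphism type is governed by this data — the classical template here is to take an easily-presented ``base'' group and attach, via HNN extensions or amalgamated free products driven by the enumeration, gadgets that collapse or survive exactly according to whether the relevant arithmetical condition fires. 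The group $G_i$ should be designed so that $G_i \cong G_j \iff i \mathrel{E} j$, with the construction recursive in the index for $E$. To control the number of generators, I would invoke (a recursive, uniform version of) the Higman–Neumann–Neumann embedding theorem, which embeds any finitely generated recursively presented group into a $2$-generated recursively presented group while preserving and reflecting isomorphism type in a controlled way; iterating or combining such embeddings with the bookkeeping generators needed for the coding gadgets yields the stated bound of $6$ generators. (The precise count will depend on how many auxiliary generators the coding machinery consumes before the final compression step; the number $6$ presumably falls out of optimizing that.)

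The main obstacle, as I see it, is the bidirectional faithfulness of the coding: it is comparatively easy to make a ``yes'' instance of $E$ force an isomorphism (just present the two groups so that the extra relators trivialize matching pieces), but ensuring that a ``no'' instance genuinely produces non-isomorphic groups — robustly, and uniformly, against all possible isomorphisms, not just the obvious ones — requires an invariant of the group that is both computable-from-the-construction and sensitive to the $\Sigma^0_3$ data. This is exactly the kind of difficulty Miller confronted for the finitely presented, c.e. case, and the higher complexity here ($\Sigma^0_3$ rather than $\Sigma^0_1$) means the invariant must itself be allowed to be genuinely non-c.e., so one cannot simply read non-isomorphism off a failed search. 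I would handle this by engineering the groups so that the relevant isomorphism invariant is something like the ``eventual behavior'' of an attached sequence of finite or finitely-presented pieces (so that $\Pi^0_2$-ness of a sub-condition is witnessed by stabilization), and then arguing a rigidity lemma: any isomorphism $G_i \to G_j$ must respect the gadget structure (e.g., by a malnormality or Britton's-lemma argument about the HNN/amalgam structure), hence must match up the coded data. Getting that rigidity lemma clean, and simultaneously keeping the generator count down to $6$ through the HNN compression, is where the real work lies; the rest is bookkeeping over the $\Sigma^0_3$ presentation of $E$.
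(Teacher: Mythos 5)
Your upper-bound paragraph is fine and matches the paper. The hardness direction, however, has a genuine gap: the construction is never actually specified, and the two places where you defer the work are precisely the places where the proof could fail. First, ``attach, via HNN extensions or amalgamated free products driven by the enumeration, gadgets that collapse or survive'' is a template, not a construction; the accompanying ``rigidity lemma'' (that every isomorphism $G_i \to G_j$ must respect the gadget structure) is exactly the hard content of the theorem, and you acknowledge but do not supply it. There is no standard off-the-shelf Britton's-lemma argument that gives rigidity against \emph{all} isomorphisms for an infinite, enumeration-driven family of gadgets while keeping the whole object recursively presented; this is why the actual proof leans on a pre-existing, fully worked-out coding. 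Second, your plan for controlling the generator count is unsound as stated: the Higman--Neumann--Neumann embedding of a countable group into a $2$-generated group does not ``preserve and reflect isomorphism type'' --- non-isomorphic groups can have isomorphic $2$-generated overgroups, and the embedding is not functorial in the needed sense --- so you cannot compress to few generators this way without destroying the reduction. The number $6$ does not ``fall out of optimizing'' a compression step.

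For comparison, the paper factors the reduction through two concrete intermediates. It first fixes a specific $\Sigma^0_3$-complete equivalence relation (shift-equivalence of c.e.\ subsets of $\mathbb{Z}$, complete by a criterion of Andrews--San Mauro) and reduces it to the isomorphism problem for $1$-generated recursively presented structures in a language of countably many unary function symbols; the rigidity there is elementary because a homomorphism of a $1$-generated structure is determined by the image of the generator. It then applies the small-cancellation coding of Harrison-Trainor and Ho, which sends a $k$-generated structure $\mc A$ to a $(k+5)$-generated group $G(\mc A)$ with $\mc A \cong \mc B$ iff $G(\mc A)\cong G(\mc B)$ (the rigidity being that the orbit of the five auxiliary generators is definable), checking additionally that this coding is effective on c.e.\ \emph{presentations} rather than on computable copies. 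The $6$ is $1+5$. If you want to salvage your outline, the missing ingredient is a citable coding of structures into groups with the iff property and a controlled generator count --- that is what replaces both your rigidity lemma and your HNN compression.
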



\subsection{The Word Problem}

Our second main result is on the word problem. Recall that for any c.e.\ Turing degree, there is a finitely presented group whose word problem is of that Turing degree. However, this does not hold in the finer world of equivalence relations: not every c.e.\ equivalence relation degree (under computable reduction) contains the word problem of a recursively presented group. This can be seen from the fact that any equivalence relation which is the word problem of a group has the same Turing degree as each of its equivalence classes. There are non-computable ceers where every equivalence class is computable. Then any ceer in its degree will also be non-computable but have computable equivalence classes. Such degrees cannot contain recursively presented groups. 

Further, Della Rose, San Mauro, and Sorbi showed that there are degrees that contain word problems of recursively presented groups but not of finitely generated recursively presented groups \cite{De23}.\footnote{In \cite{De23}, they introduce the notion of a hyperdark degree, which cannot be realized as the word problem of a finitely generated algebra. They also construct a semigroup with a hyperdark word problem, and with slight modification, this construction can be made to yield a group.}

On the other hand, we do not know if there are degrees containing finitely generated recursively presented groups but not finitely presented groups. Given this situation, it is sensible to ask the complexity of the collection of degrees which contain word problems of finitely presented, or finitely generated recursively presented, or recursively presented, groups. To formalize the question, we use the natural enumeration of all ceers: $E_i$ for $i\in \omega$. Then we consider the set of $i$ so that $E_i$ is computably equivalent to a word problem of a finitely presented group, or similarly for the other classes of groups.

We prove a general result which answers all of these questions simultaneously.
\begin{defn}
	For $S\subset \omega$, which we think of as a set of indices of ceers, let $[S]=\{i\mid \exists j\in S (E_i\equiv E_j)\}$ be the closure of $S$ under computable equivalence.
	
	We say $S$ is \emph{proper on the infinite ceers} if there are $E_i$ and $E_j$ both with infinitely many classes so that $i\in [S]$ and $j\notin [S]$.
\end{defn}

We prove the following theorem, which we think of as an analogue of Rice's Theorem for index sets of ceers.

\begin{thm}
	If $S$ is $\Sigma^0_3$ and proper on the infinite ceers, then $[S]$ is $\Sigma^0_3$-complete.
\end{thm}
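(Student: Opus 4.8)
The plan is to first dispatch the easy direction: $[S]$ is $\Sigma^0_3$. The relation $E_i \le E_j$ is $\Sigma^0_3$, since it asserts the existence of an index $e$ such that $\varphi_e$ is total (a $\Pi^0_2$ condition) and such that for all $x,y$ one has $x\mathrel{E_i}y \leftrightarrow \varphi_e(x)\mathrel{E_j}\varphi_e(y)$ (for each $x,y$ this is a biconditional of two $\Sigma^0_1$ statements, hence $\Pi^0_2$, and the universal quantifier keeps it $\Pi^0_2$). Hence $E_i\equiv E_j$ is $\Sigma^0_3$, so $[S]=\{i:\exists j\,(j\in S \wedge E_i\equiv E_j)\}$ is $\Sigma^0_3$ whenever $S$ is.

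For hardness, fix a $\Sigma^0_3$-complete set $A$ and write $n\in A \iff \exists k\,\theta(n,k)$ with $\theta$ a $\Pi^0_2$ predicate; uniformly in $(n,k)$ fix a non-decreasing computable approximation $s\mapsto \ell_s(n,k)$ with $\lim_s \ell_s(n,k)=\infty$ iff $\theta(n,k)$. I want a computable $f$ with $n\in A \iff f(n)\in[S]$, and I will build $E_{f(n)}$ so that $E_{f(n)}\equiv P$ if $n\in A$ and $E_{f(n)}\equiv Q$ if $n\notin A$ for suitably chosen fixed ceers $P\in[S]$ and $Q\notin[S]$; since $[S]$ is closed under $\equiv$, this suffices. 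The construction reserves a ``baseline'' region carrying a fixed copy of $Q$ and, for each $k$, runs a module $M_k$ on its own region. Module $M_k$ attempts to build a copy of $P$, but \emph{paced} by its approximation: it only introduces the $m$-th element of its $P$-copy once $\ell_s(n,k)$ has passed the index of that element, parking its remaining elements by merging them into the baseline, and it also collapses its current partial $P$-copy into the baseline under a priority/permission discipline keyed to the values $\ell_s(n,j)$ (so that at most one module is ``believed'' at a time). Then if $\theta(n,k)$ holds $M_k$ completes a full copy of $P$, while if $\theta(n,k)$ fails $M_k$ contributes, up to $\equiv$, only ``$\mathrm{Id}$-bounded'' residue, where $\mathrm{Id}$ is the ceer whose classes are the singletons. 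Consequently $n\in A$ yields $E_{f(n)}\equiv Q\oplus P\oplus(\text{residue}\le\mathrm{Id})$ and $n\notin A$ yields $E_{f(n)}\equiv Q\oplus(\text{residue}\le\mathrm{Id})$, where $\oplus$ is the join of ceers.

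For this to close I need the anchors to absorb the residue and one another: $P$ should be idempotent for $\oplus$ and absorb copies of $P$, of $Q$, and of $\mathrm{Id}$, while $Q$ should absorb $\mathrm{Id}$; then $Q\notin[S]$ forces $n\notin A\Rightarrow f(n)\notin[S]$ and $P\in[S]$ forces $n\in A\Rightarrow f(n)\in[S]$. Extracting such anchors from the bare hypothesis that $S$ is proper on the infinite ceers is where the real work lies. Starting from infinite-class ceers $E_a\in[S]$ and $E_b\notin[S]$, the ``generic'' choice $Q:=E_b\oplus\mathrm{Id}$ and $P:=\omega E_a\oplus\omega E_b\oplus\mathrm{Id}$ already has all the listed closure properties and satisfies $Q\le P$, so one only needs $Q\notin[S]$ and $P\in[S]$. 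The degenerate cases — when no $\mathrm{Id}$-absorbing infinite-class ceer lies outside $[S]$, or when $[S]$ contains no ceer above the relevant $Q$ — must be split off and handled with other anchors (for example $P$ a universal ceer, which is $\oplus$-idempotent and absorbs everything, when a universal ceer lies in $[S]$), or by replacing the ``copy of $P$'' modules with modules that instead drive $E_{f(n)}$ toward being decidable with infinitely many classes versus self-full (every reduction of $E_{f(n)}$ to itself surjective on classes). I expect the main obstacle to be exactly this twofold point: carrying out the case analysis on where $[S]$ sits in the degrees of ceers, and getting the permission/parking bookkeeping in the paced construction right, so that a false-alarm module (bounded $\ell_s(n,k)$) really leaves only absorbable residue even though its being a false alarm — a $\Sigma^0_2$ fact — is never confirmed in finite time, the usual $\exists\forall\exists$ tension. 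Once those are settled, the $\oplus$-algebra and the verification of the two cases $n\in A$, $n\notin A$ are routine.
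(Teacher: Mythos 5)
The easy direction and the overall shape of your reduction (send a $\Sigma^0_3$-complete set $A$ to indices of uniformly built ceers, anchored by an infinite ceer inside $[S]$ and one outside) are fine, but there is a genuine gap at exactly the point you flag as ``where the real work lies,'' and it does not close. Your plan needs a pair of anchors $P\in[S]$ and $Q\notin[S]$ with strong $\oplus$-absorption properties ($P$ idempotent and absorbing $P$, $Q$, $\mathrm{Id}$; $Q$ absorbing $\mathrm{Id}$), because the outcome of your paced construction is only ever $Q\oplus(\text{stuff})$ or $Q\oplus P\oplus(\text{stuff})$ up to $\equiv$. But the hypothesis gives you nothing of the sort: $[S]$ is merely the $\equiv$-closure of an arbitrary $\Sigma^0_3$ set containing some infinite ceer and omitting some infinite ceer. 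For example, $S$ could be the single class $[E]$ of a dark infinite ceer $E$ (no infinite c.e.\ transversal), in which case $E\oplus\mathrm{Id}\not\equiv E$ and no $P\in[S]$ absorbs $\mathrm{Id}$; dually, it can happen that every candidate $Q=E_b\oplus\mathrm{Id}$ you form lands back inside $[S]$. Your ``generic choice'' $P=\omega E_a\oplus\omega E_b\oplus\mathrm{Id}$ has no reason to lie in $[S]$, and the proposed fallbacks (universal ceers, self-fullness) are a sketch of a case analysis over the structure of the ceer degrees that is not carried out and, as far as I can see, cannot be made exhaustive from the stated hypotheses.

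The paper's proof avoids needing any $Q\notin[S]$ as a \emph{target}. When $i\notin A$ it does not try to make the constructed ceer $Z_i$ equivalent to anything fixed; instead it directly diagonalizes: for each pair of potential reductions $(\varphi,\psi)$, each $j$, and each $m$, a requirement guards against $\varphi,\psi$ witnessing $Z_i\equiv E_j$ whenever the $\Pi^0_2$ predicate $Y(m,j)$ (``$m$ witnesses $E_j\in S$'') holds. This is where the $\Sigma^0_3$-ness of $S$ itself, not just of $[S]$, is used — it supplies the $\Pi^0_2$-witnessed enumeration of the dangerous targets $E_j$. The fixed infinite ceer $R\notin[S]$ enters only as a threat inside each such requirement: the strategy copies $R$ into $Z_i$ for as long as the reductions look correct, so if they were actually correct one would get $R\equiv Z_i\equiv E_j\in[S]$, a contradiction; hence each such requirement acts finitely often and is met. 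Meanwhile the positive requirements, one for each $\Pi^0_2$ witness that $i\in A$, build $Z_i\equiv E$ outright by a back-and-forth copying of $E$ that reinitializes everything below it, so no $\oplus$-decomposition or absorption is ever needed. I would redirect your effort from the anchor-hunting case analysis to this diagonalization scheme.
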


Using this theorem we see that the collection of $i$ such that $E_i$ is equivalent to the word problem of a finitely presented group is $\Sigma^0_3$-complete, and similarly for the other two classes of groups. We also show that the assumption that $S$ is $\Sigma^0_3$-complete is optimal by constructing a $\Pi^0_3$ set which is proper on the infinite ceers but $[T]$ is neither $\Sigma^0_3$ nor $\Pi^0_3$-hard.

\section{The isomorphism problem for finitely generated groups}

In this section, we will show that the isomorphism problem for finitely generated recursively presented groups is $\Sigma^0_3$-complete. We use two pieces of heavy machinery. Firstly, we use the $\Sigma^0_3$-completeness of a particular convenient equivalence relation, which follows from a general criterion for $\Sigma^0_3$-completeness proved by Andrews and San Mauro \cite{ASM1}.

\begin{defn}
	Let $(V_i)_{i \in \omega}$ be a uniform enumeration of all of the c.e.\ subsets of $\mathbb{Z}$. For $X \subseteq \mathbb{Z}$ and $n\in \omega$, we define $X+n$ as the set $\{m+n\mid m\in X\}$. We define the equivalence relation $U$ given by the shift operator $L$ on c.e.\ sets as follows:
	$$i\mathrel{U} j \Leftrightarrow \exists n \left[ V_i = (V_j + n) \right].$$
\end{defn}

\begin{thm}[An application of Andrews and San Mauro \cite{ASM1}] \label{UisUniversal}
	
	The equivalence relation $U$ is a $\Sigma^0_3$-complete equivalence relation.
\end{thm}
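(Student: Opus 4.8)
The plan is to invoke the general criterion from Andrews and San Mauro, which gives sufficient conditions for an equivalence relation built from c.e.\ sets to be $\Sigma^0_3$-complete. The first step is to verify the easy direction: $U$ is $\Sigma^0_3$. This is routine — the defining formula $\exists n \left[ V_i = (V_j + n)\right]$ unpacks as $\exists n\, \forall m\, (m \in V_i \leftrightarrow m - n \in V_j)$, and "$m \in V_i$" is $\Sigma^0_1$ while its negation is $\Pi^0_1$, so the matrix is $\Pi^0_2$ and the whole statement is $\Sigma^0_3$. I would state this in a sentence and move on.

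The substance is the hardness direction, and here the intended route is to quote the Andrews–San Mauro machinery rather than build a reduction by hand. Their criterion (in \cite{ASM1}) typically says: if you have an equivalence relation $E$ on indices for c.e.\ sets of the form "$i \mathrel{E} j$ iff $W_i$ and $W_j$ are related by some transformation from a suitable group/monoid of operations," and the family of operations is rich enough to code arbitrary $\Sigma^0_3$ information while being tame enough that the relation stays $\Sigma^0_3$, then $E$ is $\Sigma^0_3$-complete. So the key steps are: (i) identify $U$ as an instance of their framework, with the shift operators $L^n$ (translation by $n \in \mathbb{Z}$) playing the role of the group of allowed transformations acting on c.e.\ subsets of $\mathbb{Z}$; (ii) check that this action satisfies whatever hypotheses their theorem demands — e.g., that one can uniformly and computably, given an index for $V_j$ and an integer $n$, produce an index for $V_j + n$ (immediate), that the acting group $(\mathbb{Z},+)$ is infinite and acts "freely enough" on appropriately chosen witness sets, and that there exist c.e.\ sets in $\mathbb{Z}$ that are pairwise inequivalent in the required way; (iii) conclude $\Sigma^0_3$-completeness directly from their result.

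I expect the main obstacle to be purely expository: matching the exact hypotheses in \cite{ASM1} to the shift setup, since their theorem is likely stated for a general class of "perfect" or "pre-well-ordered" families or for actions of specific groups, and one must confirm that translation on $\mathbb{Z}$ is covered (it should be — this is essentially the archetypal example, and the problem is closely related to the classical fact that the Turing-equivalence-modulo-finite or the "almost equal up to shift" relations sit high in the hierarchy). If \cite{ASM1} only handles, say, a one-sided shift or actions on $\omega$ rather than $\mathbb{Z}$, I would note the trivial bijection $\mathbb{Z} \cong \omega$ that transports the construction, being mild careful that the shift becomes a partial operation and adjusting accordingly. Beyond that adjustment, no genuinely new argument is needed; the whole theorem is, as the title of the statement says, "an application" and the proof should be a short paragraph citing \cite{ASM1} plus the $\Sigma^0_3$ upper-bound computation above.
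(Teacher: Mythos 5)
Your proposal matches the paper's treatment: the paper gives no standalone proof of this theorem, stating it precisely as an application of the general $\Sigma^0_3$-completeness criterion of Andrews and San Mauro \cite{ASM1}, which is exactly the route you take (routine $\Sigma^0_3$ upper bound plus citation of their criterion for hardness). Your hedging about the exact form of the hypotheses in \cite{ASM1} is the only loose end, but verifying that the shift action fits their framework is precisely the work the paper also delegates to that reference.
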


From this, we show that there is a signature $\mathcal{L}$ so that the isomorphism problem for 1-generated recursively presented $\mathcal{L}$-structures is $\Sigma^0_3$-complete:

\begin{lemma}\label{lem:univ-unary-str}
	Let $\mathcal{L}=\{f,g\}\cup \{h_i\mid i\in \omega\}$ where each symbol is a unary function symbol. Then the isomorphism problem for $1$-generated recursively presented $\mathcal{L}$-structures is $\Sigma^0_3$-complete.
\end{lemma}
\begin{proof}
First observe that the isomorphism problem, namely, saying there is an isomorphism between $1$-generated $\mc A$ to $\mc B$ as given by recursive presentations, is $\Sigma^0_3$. For $\Sigma^0_3$-hardness, it suffices by Theorem \ref{UisUniversal} to show that $U$ computably reduces to the isomorphism problem for $1$-generated recursively presented $\mathcal{L}$-structures.
	
For a c.e.\ set $V_r\subseteq \mathbb{Z}$, we effectively produce a c.e.\ presentation of a 1-generated $\mathcal{L}$-structure $\mathcal{A}_r$ such that $r \mathrel{U} s$ if and only if $\mathcal{A}_r \cong \mathcal{A}_s$. For a given $r$, the construction of $\mc{A}_r$ is as follows.
	
Let $\mathcal{A}_r$ have universe $\{x_i\mid i\in \mathbb{Z}\}\cup \{y_{i,j}\mid i,j\in \mathbb{Z}\}\cup \{z_{i,j}^k\mid i,j\in \mathbb{Z}, k\in \omega, j-i\notin V_r\}$. In $\mc{A}_r$, let $f(x_i)=x_{i+1}$ and $f(v)=v$ if $v$ is not an $x_i$. Let $g=f^{-1}$. Let $h_j(x_i)=y_{i,j}$, $h_j(y_{i,k}) = y_{i,k}$ if $j\neq k$. Let $h_{j}(y_{i,j}) = y_{i,j}$ if $j-i\in V_r$, and let $h_{j}(y_{i,j}) = z_{i,j}^0$ if $j-i\notin V_r$. Finally, $h_j(z^k_{i,j'}) = z^k_{i,j'}$ if $j\neq j'$ and $h_j(z^k_{i,j}) = z^{k+1}_{i,j}$. (See Figure \ref{fig}.)
\begin{figure}
\caption{An example where $j-i \notin V_r$ but $j'-i\in V_r$.}\label{fig}
\begin{tikzcd}
\cdots \arrow[rr,"f"]	&	&x_{i-1} \arrow[rr,"f"] 	&							&x_{i} \arrow[rr,"f"] \arrow[ld,"h_j"]\arrow[rd,"h_{j'}"]	&									&x_{i+1} \arrow[rr,"f"]		&	&\cdots 	\\
					&	&\cdots					&y_{i,j}\arrow[d,"h_j"]		&\cdots													&y_{i,j'}\arrow[loop below,"h_{j'}"]	&\cdots						&	&		\\
					&	&						&z^0_{i,j}\arrow[d,"h_j"]	&														&									&							&	&		\\
					&	&						&z^1_{i,j}\arrow[d,"h_j"]	&														&									&							&	&		\\
					&	&						&\vdots						&														&									&							&	&		
\end{tikzcd}
\end{figure}
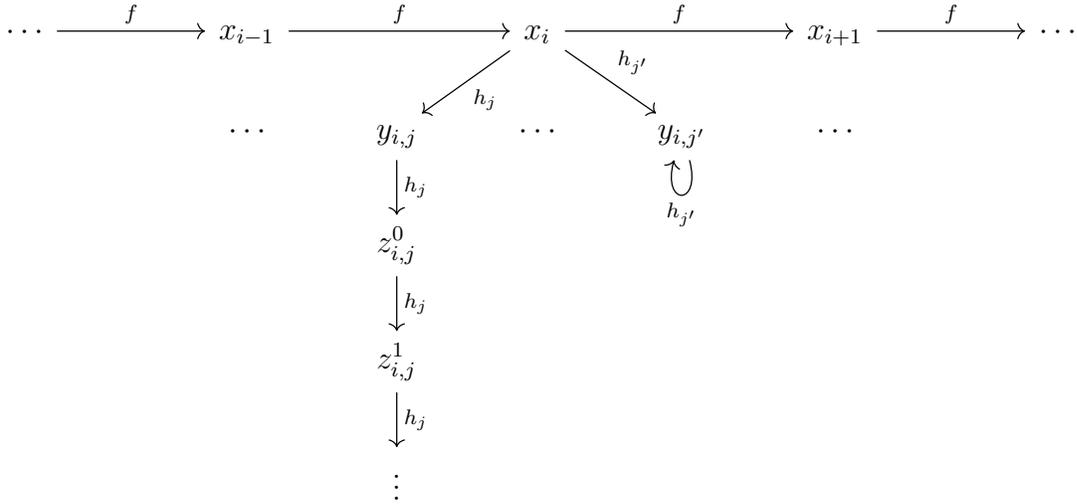

	First, observe that $\mathcal{A}_r$ is generated by $x_0$: Using $f$ and $g$, $x_0$ generates $\{x_i\mid i\in \mathbb{Z}\}$, then the $h_j$-functions generate all the $y_{i,j}$'s. Finally, iteratively applying the $h_j$-functions to the $y_{i,j}$ where $j-i\notin V_r$ generates all of the $z_{i,j}^k$.
	
	Next observe that $\mathcal{A}_r$ can be given as a recursive presentation $\langle x_0\mid S\rangle$ where $S$ is a c.e.\ collection of relators in $x_0$. These relators need to say that $f$ and $g$ are inverses for every element, that $f$ is the identity on elements in the range of each $h_i$. Similarly, each $h_j$ is the identity on any element in the range of $h_i$ for $i\neq j$. Finally, we need an axiom saying that $h_j(h_j(f^i(x_0)) = h_j(f^i(x_0))$ if $i-j\in V_r$, and $h_j(h_j(g^i(x_0)) = h_j(g^i(x_0))$ if $-i-j\in V_r$. This is a c.e.\ list of axioms, and can be turned into a computable list of axioms by putting $f^{s} (h_j(h_j(g^i(x_0)) ) = h_j(g^i(x_0))$ if $s$ is the stage at which we see $i-j$ enter $V_r$.
	
	Next observe that if $\alpha$ is an isomorphism of $\mc{A}_r$ to $\mc{A}_s$, then it is determined by $\alpha(x_0^{\mc{A}_r})$, which is necessarily the element $x_m^{\mc{A}_s}$ for some $m$ (since it must be an element moved by $f$). It is then straightforward to see that any homomorphism with $\alpha(x_0^{\mc{A}_r}) = x_m^{\mc{A}_s}$ gives an isomorphism if and only if the truth values of $h_j(y_{i,j}) = y_{i,j}$ are preserved for each $i,j$. In particular, we must have $j-i\notin V_r$ if and only if $j-i-m\notin V_s$. That is, $V_r = V_s + m$.
	
	On the other hand, if $V_r = V_s + m$ then $x_0^{\mc{A}_r} \mapsto x_m^{\mc{A}_s}$ induces an isomorphism of $\mc{A}_r$ to $\mc{A}_s$.	Thus $\mathcal{A}_r\cong \mathcal{A}_s$ if and only if $r \mathrel{U} s$.
\end{proof}

To transfer the completeness to the isomorphism problem for groups, we use the machinery from Harrison-Trainor and Ho \cite{Ha21} where it was shown that finitely generated groups are universal under reductions via infinitary bi-interpretability. The idea is to use small cancellation theory to encode an arbitrary finitely generated structure---such as $\mc{A}_r$---into a finitely generated group, where the functions of $\mc{A}_r$ become words in the group.

\begin{theorem}
	The isomorphism problem for $6$-generated recursively presented groups is $\Sigma^0_3$-complete.
\end{theorem}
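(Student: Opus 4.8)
The plan is to combine Lemma~\ref{lem:univ-unary-str} with the bi-interpretability machinery from Harrison-Trainor and Ho \cite{Ha21}, upgrading the latter so that it applies uniformly to recursively presented structures rather than only to finitely presented (or computable) ones. The upshot of \cite{Ha21} is that there is a uniform procedure which, given a finitely generated structure $\mc A$ in a fixed computable language (here $\mathcal L = \{f,g\}\cup\{h_i\}$), produces a finitely generated group $G(\mc A)$ together with a fixed tuple of words so that $\mc A$ is interpreted in $G(\mc A)$, $G(\mc A)$ is interpreted in $\mc A$, and the round trips are (infinitary) definable isomorphisms; crucially, $\mc A \cong \mc B$ iff $G(\mc A)\cong G(\mc B)$. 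The construction is via small cancellation theory: one takes a presentation whose generators code the generators of $\mc A$ (and auxiliary generators to name the finitely many unary functions one actually needs), and whose relators encode the graphs of the functions of $\mc A$ as equalities of words, padded so that the resulting presentation satisfies a small cancellation condition such as $C'(1/6)$. Because $\mc A_r$ from Lemma~\ref{lem:univ-unary-str} is $1$-generated and the encoding only needs to name finitely many function symbols explicitly (the others acting trivially except on finitely many ``layers''), one checks that the number of generators of the resulting group can be bounded by $6$.

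Concretely, I would first recall/restate the precise statement extracted from \cite{Ha21}: for the language $\mathcal L$ there is a total computable $r \mapsto (\text{presentation of } G_r)$ sending a recursive presentation of a $1$-generated $\mathcal L$-structure $\mc A_r$ to a recursive presentation of a $6$-generated group $G_r$, with $G_r \cong G_s \iff \mc A_r \cong \mc A_s$. The key point to verify is \emph{uniformity and the recursive-presentation case}: in \cite{Ha21} the emphasis may be on finitely presented or computable structures, but the small cancellation construction only uses the relator set of $\mc A_r$ as a c.e.\ (indeed computable, as arranged in the proof of Lemma~\ref{lem:univ-unary-str}) set, and feeds it through a computable transformation into the relator set of $G_r$; so $G_r$ is recursively presented and the map $r\mapsto G_r$ is computable. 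Composing the reduction $U \leq (\text{iso problem for } 1\text{-generated r.p.\ } \mathcal L\text{-structures})$ from Lemma~\ref{lem:univ-unary-str} with $r \mapsto G_r$ gives a computable reduction $U \leq (\text{iso problem for } 6\text{-generated r.p.\ groups})$, and since $U$ is $\Sigma^0_3$-complete by Theorem~\ref{UisUniversal}, this yields $\Sigma^0_3$-hardness.

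For the upper bound I would note that the isomorphism problem for recursively presented groups on $\le 6$ generators is $\Sigma^0_3$: ``there is an isomorphism'' is $\exists$ (a finite assignment of generators to words) followed by the $\Pi^0_2$-type condition that this assignment and its inverse are well-defined homomorphisms that compose to the identity — each such condition being expressible by asking that certain words lie in the normal closure of the relators, which is c.e., so the whole matrix is $\Pi^0_2$ and the statement is $\Sigma^0_3$. (This is the same observation already made in the proof of Lemma~\ref{lem:univ-unary-str}.) Combining the two bounds gives $\Sigma^0_3$-completeness.

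The main obstacle I anticipate is bookkeeping rather than a new idea: confirming that the \cite{Ha21} encoding (a) genuinely preserves the isomorphism relation, not merely the isomorphism \emph{type}, in a way that is uniform in the index $r$; (b) can be carried out starting from a mere recursive presentation of $\mc A_r$ (the small cancellation condition must be maintainable while relators are enumerated, e.g.\ by the padding-with-stages trick already used in Lemma~\ref{lem:univ-unary-str}); and (c) keeps the generator count at $6$ — this requires checking how many auxiliary generators the coding of the unary functions $f,g,h_i$ costs, using that $\mc A_r$ is $1$-generated and that $h_i$ for large $i$ acts nontrivially only in a controlled way. Once these points from \cite{Ha21} are cited with the needed uniformity, the proof is the short composition-of-reductions argument sketched above.
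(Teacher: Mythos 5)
Your proposal follows essentially the same route as the paper: establish the $\Sigma^0_3$ upper bound directly, then compose the reduction of Lemma~\ref{lem:univ-unary-str} with the small-cancellation encoding $\mc A \mapsto G(\mc A)$ from \cite{Ha21}, verifying that this encoding is uniformly effective on c.e.\ presentations (the paper handles this by taking all $\mc L$-terms as generators and enumerating term-equalities as they are derived) and that a $1$-generated structure yields a $6$-generated group via the five auxiliary generators $b,c,d,f_1,f_2$. The verification points you flag as remaining bookkeeping are exactly the three claims the paper proves, so the approach is correct and matches the paper's.
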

\begin{proof}

First observe that the isomorphism problem, namely, saying there is an isomorphism between $6$-generated groups $\mc A$ and $\mc B$ as given by recursive presentations, is $\Sigma^0_3$. 

For $\Sigma^0_3$-hardness, by Lemma \ref{lem:univ-unary-str} it suffices to reduce the isomorphism problem for $1$-generated recursively presented $\mc L$-structures to the isomorphism problem for $6$-generated recursively presented groups. Namely, we need an effective procedure to enumerate the presentation of a $6$-generated group $G(\mc A)$ from any c.e.\ presentation of a $1$-generated $\mc L$-structure $\mc A$ so that $\mc A \cong \mc B$ if and only if $G(\mc A) \cong G(\mc B)$. 

Fix $\mathcal{L}=\{f,g\}\cup \{h_i\mid i\in \omega\}$ as in Lemma \ref{lem:univ-unary-str}. The construction in \cite[\S 3.3]{Ha21} gives, for every $\mathcal{L}$-structure $\mathcal{A}$, a group $G(\mc A)$ such that $\mc{A} \cong \mc{B}$ if and only if $G(\mc{A}) \cong G(\mc{B})$. In \cite{Ha21} the focus was on computable copies of the structures, and it was verified that given a copy of $\mc{A}$ we can compute a copy of $G(\mc{B})$. For our purposes in this paper we must verify that this construction is effective on presentations of the structures, that is, given a presentation of $\mc{A}$ we can compute a presentation of $G(\mc{A})$; we prove this in Claim \ref{claim:one}. (The difference between a computable copy and a presentation is that a computable copy includes a solution of the word problem.) We repeat the construction here, slightly simplifying it as we only have unary functions.

We denote the identity of the group as $e$. Define the presentation ${\la \{a\}_{a \in A},b,c,d,f_1,f_2 \mid P \ra}$ of $G(\mc A)$ to have generators $\{a\}_{a\in A} \cup \{b,c,d,f_1,f_2\}$ and the following set $P$ of relators:
\begin{itemize}
	\item $f_i^{p_i} = e$, where $p_1,p_2$ are distinct primes greater than $10^{10}$,
	\item $v(f_1,f_2) = v(f_2,f_1) = e$,
	\item $u_b(b,f_1) = u_b(b,f_2) = e$, $u_c(c,f_1) = u_c(c,f_2) = e$, $u_d(d,f_1) = u_d(d,f_2) = e$,
	\item $u_A(a,c) = u_A(a,d) = e$ for $a \in A$,
	\item $w_f(a,b) = a'$ if $f(a) = a'$, $w_g(a,b) = a'$ if $g(a) = a'$, and $w_{h_i}(a,b) = a'$ if $h_i(a) = a'$ for $a,a'\in A$.
\end{itemize}
where
\begin{itemize}
	\item $u_A(x,y) = x y x y^{4} \cdots x y^{999^2} x y^{1000^2}$,
	\item $u_b(x,y) = x y^{1001^2} x y^{1002^2} \cdots x y^{1999^2} x y^{2000^2}$,
	\item $u_c(x,y) = x y^{2001^2} x y^{2002^2} \cdots x y^{2999^2} x y^{3000^2}$,
	\item $u_d(x,y) = x y^{3001^2} x y^{3002^2} \cdots x y^{3999^2} x y^{4000^2}$,
	\item $v(x,y) = x y^{4001^2} x y^{4002^2} \cdots x y^{4999^2} x y^{5000^2}$,
	\item $w_f(x,y) = x y^{100+1} x y^{100 + 2} \cdots x y^{100 + 100}$, 
	\item $w_g(x,y) = x y^{200+1} x y^{200 + 2} \cdots x y^{200 + 100}$, 
	\item $w_{h_i}(x,y) = x y^{100(i+3)+1} x y^{100(i+3) + 2} \cdots x y^{100(i+3) + 100}$.

\end{itemize}

In the above presentation, we use the elements of $\mc{A}$ as generators for $G(\mc{A})$. But if $\mc{A} \cong \langle X \mid S \rangle$ is only a c.e.\ presentation, then in this c.e.\ presentation we do not have a listing of the elements of $\mc{A}$ since we do not know which terms are equal. To obtain a c.e.\ presentation of $G(\mc{A})$ from a c.e.\ presentation of $\mc{A}$, we must slightly modify the above presentation of $G(\mc{A})$.

\begin{claim}\label{claim:one}
Given a c.e.\ presentation of $\mc A$, we can find a c.e.\ presentation of $G(\mc A)$. This is uniform.
\end{claim}
\begin{proof}
Let $\langle X \mid S \rangle$ be a c.e.\ presentation of $\mc A$. We will enumerate a presentation $P'$ of $G(\mc A)$ which contains more generators and slightly different relations than $P$. 

The generating set of our presentation will be $T\cup \{b,c,d,f_1,f_2\}$, where $T$ is the collection of all the $\mc L$-terms in $X$. We do this by enumerating all $\mc L$-terms as we enumerate $X$. In addition to the relations in $P$, we also enumerate $w_\gamma(a,b) = \gamma(a)$ for every $\gamma \in \mc L$ and $a\in T$. 

To enumerate the relations in $P$, first observe that the first three kinds of relations do not depend on $\mc A$ at all, so we simply enumerate them at the start. We enumerate relations of the fourth kind as we enumerate the generators $\{a\}_{a\in \mc A}$. Lastly, at every stage, we enumerate all the consequences of $S$. As we enumerate a consequence $t = t'$ of $S$ (where $t,t' \in T$ are terms in the original presentation $P$), we enumerate the relation $t = t'$ (where $t, t'$ are generators in the new presentations $P'$) into $P'$.

We see that the set $T$, modulo equality in $P'$, is in a natural bijection with the domain of $\mc A$. Indeed, $T$ naturally surjects onto $\mc A$, and if two terms are equal in $\mc A$, it must be a consequence of some finitely many relations in $S$, so we will also enumerate the identity into $P'$. This bijection from $T$ to $\mc A$ induces a bijection from $P'$ to $P$, which is also a homomorphism. So the c.e.\ presentations $P'$ and $P$ define isomorphic groups, namely $G(\mc A)$. 
\end{proof}

The fact that $G(\mc{A})$ actually defines a group $G(\mc{A})$ encoding the structure of $\mc{A}$ is the same as in \cite{Ha21}. We refer the reader there for the argument, but here briefly summarize that the relations $u_A(a,c) = u_A(a,d) = e$ serve to define the elements of $A$. The relations $w_f(a,b) = a'$ if $f(a) = a'$ serve to define $f$ inside of $G(\mc{A})$, and similarly for $g$ and the $h_i$. Finally, small cancellation theory plays the role of ensuring that there is not too much collapse, e.g., that there are no unintended consequences of the relators.

\begin{claim}
	Two $\mc{L}$-structures $\mc{A}$ and $\mc{B}$ are isomorphic if and only if $G(\mc{A})$ is isomorphic to $G(\mc{B})$.
\end{claim}

\begin{proof}
This follows from \cite[Theorem 3.1]{Ha21} and \cite[Lemma 3.9]{Ha21}, which says that the orbit of $(b,c,d,f_1,f_2)$ is definable. 
\end{proof}

\begin{claim}
If $\mc A$ is $k$-generated, then $G(\mc A)$ is $k+5$-generated.
\end{claim}

\begin{proof}
This follows from the proof of \cite[Lemma 3.11]{Ha21}.
\end{proof}

These three claims complete the proof of the theorem.
\end{proof}

It is not known if the number of generators in \cite{Ha21} or the special instance here is optimal. It is quite possible that there is another construction with fewer generators.

\begin{question}
	Is the isomorphism problem for $5$-generated recursively presented groups $\Sigma^0_3$-complete? How about $2$-generated recursively presented groups?
\end{question}

\section{Classifying the ceers equivalent to a word problem of groups}

In this section we seek to understand the question of which ceers are equivalent to finitely presented groups. We show that this is a $\Sigma^0_3$-complete collection, so we cannot find a simpler description of this class than simply saying it is equivalent to the word problem of a finitely presented group.

Recall the following definitions: $(E_i)_{i \in \omega}$ is an effective uniform enumeration of the ceers; if $S$ is a set of indices of ceers, then $[S]$ is its closure under computable equivalence; and $S$ is proper on the infinite ceers if $[S]$  contains neither all nor none of the infinite ceers. Though $[S]$ is the set of indices of ceers, we abuse notation and write $R\in [S]$ (or $R\in S$) to mean that there is some $i\in [S]$ (respectively $i\in S$) such that $E_i = R$.

\begin{theorem}\label{Sigma3CompleteDegreeSets}
	If $S$ is $\Sigma^0_3$ and proper on the infinite ceers, then $[S]$ is $\Sigma^0_3$-complete.
\end{theorem}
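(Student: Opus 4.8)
The plan is to show $[S]$ is $\Sigma^0_3$-hard by reducing to it a known $\Sigma^0_3$-complete set — the most convenient choice is $U$ from Theorem~\ref{UisUniversal}, or equivalently the index set $\{i : E_i \text{ has infinitely many classes}\}$, which is $\Pi^0_3$-complete, so its complement (or a suitable relativization) is $\Sigma^0_3$-complete. Since $[S]$ is a priori $\Sigma^0_3$ (it is the $\Sigma^0_3$ statement ``$\exists j \in S$ and an index for a computable reduction witnessing $E_i \equiv E_j$''), hardness suffices. Fix indices $a,b$ witnessing properness: $E_a$ has infinitely many classes with $a \in [S]$, and $E_b$ has infinitely many classes with $b \notin [S]$. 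The idea is, given a $\Sigma^0_3$ predicate $\varphi(n)$, to uniformly build a ceer $R_n$ such that $R_n \equiv E_a$ when $\varphi(n)$ holds and $R_n \equiv E_b$ when $\varphi(n)$ fails; then $n \mapsto (\text{index of } R_n)$ reduces $\varphi$ to $[S]$.

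The key construction is a ``drip-feed'' or ``ticking-clock'' argument exploiting that both $E_a$ and $E_b$ have infinitely many classes. Write $\varphi(n) \iff \exists k\, \forall m\, \exists \ell\, \theta(n,k,m,\ell)$ with $\theta$ computable; the inner $\Pi^0_2$ part can be tracked as a $\{0,1\}$-valued ``state'' that, for each guessed $k$, is eventually stably $1$ iff the $\Pi^0_2$ statement holds. The construction of $R_n$ proceeds by building, in stages, approximations that are initial segments of a copy of $E_a$ so long as the current $\Pi^0_2$ guess looks satisfied, and ``switching tracks'' to continue as a copy of $E_b$ whenever the guess is injured — using a fresh supply of classes each time so no finite damage is done. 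Because $E_a$ and $E_b$ each have infinitely many classes, we can always keep extending whichever one we are currently imitating; the standard bookkeeping (reserving disjoint blocks of $\omega$ for successive attempts, and a priority ordering on the guesses $k$) ensures that the final object $R_n$ is, up to computable equivalence, exactly $E_a$ if some guess succeeds and exactly $E_b$ otherwise. One must check that ``switching'' preserves the ceer property (it does: we only ever add new $\Sigma^0_1$ collapses) and that the limiting relation has the right isomorphism-type as a ceer degree, which is where infiniteness of classes is essential — it lets both targets absorb arbitrary finite perturbations.

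The main obstacle I expect is the track-switching bookkeeping: making precise how to transition a finite approximation of a copy of $E_a$ into a copy of $E_b$ (and back, potentially infinitely often for wrong guesses $k$) while guaranteeing the true outcome yields a relation in the correct ceer degree. The subtlety is that for a wrong guess we may switch infinitely often, and we must ensure these infinitely many switches still settle to $E_b$'s degree overall — this is handled by layering: the lowest-priority surviving requirement controls cofinitely much of $\omega$, and lower-priority wrong guesses only ever affect a ``negligible'' sub-copy that gets absorbed into the dominant $E_b$-part via its infinitely many classes. A clean way to organize this is to first prove a lemma: if $E$ and $F$ are ceers each with infinitely many classes and $P$ is a $\Pi^0_2$ predicate, then uniformly in an index for $P$ one can produce a ceer $R$ with $R \equiv E$ if $P$ holds and $R \equiv F$ if not; then one iterates this over the outer $\exists k$ quantifier, taking a computable ``join'' of the $R^{(k)}$ that collapses everything together once any single $R^{(k)}$ commits to the $E$-side. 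Assembling these pieces gives the reduction $\varphi \le_m [S]$ and hence $\Sigma^0_3$-completeness of $[S]$.
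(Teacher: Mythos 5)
Your overall architecture (reduce a $\Sigma^0_3$-complete set to $[S]$ by building, uniformly in $n$, a ceer $Z_n$ using a fixed $E_a\in[S]$ and $E_b\notin[S]$, with $\Pi^0_2$-guessing on the outer existential quantifier) matches the paper's, and your observation that $[S]$ is itself $\Sigma^0_3$ is correct. But there is a genuine gap in the negative case. You insist on the outcome $Z_n\equiv E_b$ when $\varphi(n)$ fails, and this is both stronger than what is needed and not achievable by the construction you describe. When every $\Pi^0_2$ guess $k$ fails, each guess still acts finitely often, and infinitely many guesses leave behind frozen finite fragments of attempted copies of $E_a$. The union of infinitely many such fragments is not ``negligible'': it typically contributes infinitely many extra classes (and can even encode $E_a$ itself if the fragments' domains grow). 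Absorbing infinitely many extra classes into $E_b$ requires computably producing infinitely many pairwise $E_b$-inequivalent elements, which is impossible when $E_b$ is dark; so $Z_n$ can end up strictly above $E_b$, and nothing guarantees it lands outside $[S]$. Your proposed fix via the single-$\Pi^0_2$ lemma plus a ``join'' fails for the same reason at the join step: if no $R^{(k)}$ commits to the $E_a$-side, the join is $\bigoplus_k R^{(k)}$ with each $R^{(k)}\equiv E_b$, and $\bigoplus_k E_b\not\equiv E_b$ in general (the join always admits an infinite computable transversal, hence is non-dark even when $E_b$ is dark). There is also no ``lowest-priority surviving requirement'' controlling cofinitely much of $\omega$: in the failing case all requirements survive and each controls only finitely much.

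The paper sidesteps this by demanding much less in the negative case: it only needs $Z_n$ to be inequivalent to every member of $S$, not equivalent to $E_b$. To arrange this it uses a second $\Pi^0_2$ normal form, $n\in S\iff\exists m\,Y(m,n)$, and runs diagonalization requirements $\mc{S}_{\varphi,\psi,j,m}$, one for each pair of potential reductions $(\varphi,\psi)$ and each potential witness $m$ to $E_j\in S$. Each such requirement copies $E_b$ into $Z_n$ while it believes $Y(m,j)$ and while $(\varphi,\psi)$ look like an equivalence; if it acted forever it would force $Z_n\equiv E_b$ and hence $E_b\equiv E_j\in[S]$, a contradiction, so it acts finitely often and thereby refutes $(\varphi,\psi)$. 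This use of a $\Pi^0_2$ witnessing predicate for membership in $S$ itself --- the only place the hypothesis that $S$ is $\Sigma^0_3$ is actually used --- is entirely absent from your proposal and is the missing idea. Your positive case (some guess $k$ is correct, acts infinitely often, and absorbs all of $\omega$ into a reduction to $E_a$) is essentially the paper's $\mc{R}_a$-strategy and is fine, provided the strategy's domain is extended to swallow all numbers, not just its own reserved block.
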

\begin{proof}
	We begin by noting that if $S$ is $\Sigma^0_3$, then so is $[S]$. Now suppose that $S$ is proper on the infinite ceers, namely, $[S]$ does not contain or omit all infinite ceers. Thus we can fix $E \in S$ an infinite ceer, and $R$ an infinite ceer not in $[S]$. Let $Y(x,n)$ be a $\Pi^0_2$ relation so that $n\in S$ if and only if $\exists x Y(x,n)$.
	
	Let $U$ be a $\Sigma^0_3$ set which we will 1-reduce to $[S]$. Let $T(a,i)$ a $\Pi^0_2$ relation such that $i \in U$ if and only if $\exists a T(a,i)$.
	
	
	For each $i$, we will construct (uniformly in $i$) a ceer $Z_i$ such that if $i \in U$, then $Z_i \in [S]$, and if $i \notin U$, then $Z_i \notin [S]$. Thus the function that maps $i$ to an index for $Z_i$ will witness that $U \leq_1 [S]$. In building $Z_i$, we will attempt to meet the following requirements:
	\[ \mc{R}_a: \text{If $T(a,i)$, then $Z_i \equiv E$} \]
	and
	\[ \mc{S}_{\varphi,\psi,j,m}: \text{If $Y(m,j)$, then $\varphi,\psi$ do not witness an equivalence $Z_i \equiv E_j$}. \]
	Here, $\varphi$ and $\psi$ range over the potential computable reductions between ceers.
	
	We order the requirements in order type $\omega$, and attempt to meet them by a standard finite injury construction. However, unlike a finite injury construction, we may not meet all of the requirements because we may be blocked by one requirement that acts infinitely often.
	
	The requirement $\mc{R}_a$ is attempting to ensure that if $i \in U$, then $Z_i \in [S]$ by having $Z_i \equiv E \in [S]$. If the $\Pi^0_2$ relation $T(a,i)$ holds, then $a$ is a witness to the fact that $i \in U$. Thus, if $i \in U$ with least witness $a$, it will be $\mc{R}_a$ that acts infinitely many times (blocking each of the lower-priority $\mc{S}$ requirements).
	
	The $\mc S$-requirements are attempting to make $Z_i\notin [S]$. Recall that if $Y(m,j)$, then $m$ is a witness to $E_j \in S$, and thus we must ensure that $Z_i \not\equiv E_j$. Each individual requirement $\mc{S}_{\varphi,\psi,j,m}$ ensures that the particular reductions $\varphi$ and $\psi$ do not witness a computable equivalence.
	
	These requirements are of course in conflict. This conflict will be settled by priority. If $i \notin U$, then each $\mc{R}_a$ will eventually stop taking action (once we no longer guess that the $\Pi^0_2$ relation $T(a,i)$ holds) and so each $\mc{S}$ requirement will be met. If $i \in U$, with witness $a$, then  $\mc{R}_a$ will take action infinitely many times and so no lower priority $\mc{S}$ requirement will be met.
	
	For both $T$ and $Y$, we have a computable approximation so that we infinitely often guess that $T(a,i)$ holds if and only if it holds, and similarly for $Y$. At stage $s$, we say that an $\mc{R}_a$-requirement requires attention if the approximation at stage $s$ says $T(a,i)$ is true. For a $\mc{S}_{\varphi,\psi,j,m}$-requirement, let the parameter $N$ count the number of times it has acted since last initialization. We say that a $\mc{S}_{\varphi,\psi,j,m}$-requirement requires attention if the approximation at stage $s$ says that $Y(m,j)$ is true and for each $k,l<N$: $k \mathrel{Z_i} l \leftrightarrow \phi(k)\mathrel{E_j}\phi(l)$ and $k \mathrel{E_j} l \leftrightarrow \psi(k)\mathrel{Z_i}\psi(l)$ (in particular, we demand convergence for $\phi$ and $\psi$ on all inputs $<N$).
	
	At each stage, we let the highest priority requirement which requires attention act, and we reinitialize all lower-priority requirements. Each strategy may place a finite restraint on $Z_i$ meaning that lower priority requirements cannot cause $Z_i$-collapse between two numbers below this restraint.
	
	We next describe the strategy for each requirement:
	\begin{description}
		\item[$\mc{R}_a$-strategies:] When initialized, the strategy acts as follows: Let $n$ be the restraint placed by higher-priority requirements. Let $b_0,\ldots b_m$ be the least elements of each equivalence class containing a number $\leq n$. Let $a^s_0,\ldots, a^s_m$ be the least members of the first $m+1$ equivalence classes in $E_s$. If at a later stage we see two of these become $E$-equivalent, the requirement will re-initialize itself and begin again. We begin with the map $b_\ell\mapsto a_\ell^s$ for $\ell \leq k$, and we will attempt to build a reduction from $Z_i$ to $E$. We do this as follows: Whenever the requirement acts, it will have a partial map $\tau_s$. For any $j,k\in \dom(\tau_s)$, if $\tau_s(j) \mathrel{E} \tau_s(k)$, then we cause $j\mathrel{Z_i} k$. Also, we take the first $k\in \omega$ which is not yet $Z_i$-equivalent to a member of $\dom(\tau_s)$, and we define $\tau_s(k)$ to be the least element not currently $E_s$-equivalent to a member of $\ran(\tau_s)$. We also increase the restraint on lower-priority requirements to include this number.
	
		\item[$\mc{S}_{\varphi,\psi,j,m}$-strategies:] This strategy acts exactly as the $\mathcal{R}_a$-strategy except that instead of copying $E$ into $Z_i$, it acts by copying $R$ into $Z_i$.
	
	\end{description}
	
	\begin{lemma}\label{RsSucceed}
		If an $\mathcal{R}_a$-strategy is the first to act infinitely often, then $Z_i\equiv E$.
	\end{lemma}
	\begin{proof}
		Let $n$ be the restraint placed by higher-priority requirements when this strategy is last initialized. After some later stage, the elements $a^s_0,\ldots a^s_k$ are indeed the least $k$ elements which are least in their $E$-equivalence classes. From there, the strategy builds a computable function $\tau = \bigcup_s \tau_s$ with the property that every number is $Z_i$-equivalent to a member of the domain and every number is $E$-equivalent to a member of the range. Furthermore, as soon as a number enters the domain, it is restrained, so lower priority requirements will not collapse two numbers in the domain of $\tau_s$. Higher-priority requirements cannot ever act again, so we have that $j \mathrel{Z_i} k$ if and only if $\tau(j)\mathrel{E} \tau(k)$ for every $j,k\in \dom(\tau)$. 
		For any natural number $n$, define $f(n)=\tau(m)$ where $m$ is the first that we see is in $\dom(\tau)$ and is $Z_i$-equivalent to $n$. Then $f$ is a reduction of $Z_i$ to $E$. Similarly, for $n$ any natural number, let $g(n)$ be the first element $m$ so that we see $\tau(m)$ is $E$-equivalent to $n$. Then $g$ is a reduction of $E$ to $Z_i$.
	\end{proof}

	\begin{lemma}\label{SsSucceed}
		Let $\alpha$ be an $S_{\phi,\psi,j,m}$-strategy. Suppose every strategy which is higher-priority than $\alpha$ acts only finitely often. Then either $\neg Y(m,j)$ or $\phi,\psi$ do not witness $Z_i\equiv E_j$. In particular, $\alpha$ also acts only finitely often.
	\end{lemma}
	\begin{proof}
		Note that $\alpha$ acts infinitely often if and only if it requires attention infinitely often. That requires both $Y(m,j)$ and $\phi,\psi$ to witness $Z_i\equiv E_j$.
		Suppose towards a contradiction that both $Y(m,j)$ and $\phi,\psi$ witness $Z_i\equiv E_j$, so $\alpha$ acts infinitely often. Then the same proof as in Lemma \ref{RsSucceed} would show that $Z_i\equiv R$. But since $\phi$ and $\psi$ were assumed to witness $Z_i\equiv E_j$, that would imply that $R\equiv E_j$ with $E_j\in [S]$. This contradicts $R$ being non-equivalent to any element of $S$.
	\end{proof}
	
	It follows that if any requirement acts infinitely often, the first such must be an $R_a$-strategy. In this case, $T(a,i)$ so $i\in U$ and Lemma \ref{RsSucceed} shows that $Z_i\equiv E$, so $Z_i\in [S]$. If, on the other hand, each strategy acts only finitely often, then this means $\neg T(a,i)$ for each $a$, so $i\notin U$. Then Lemma \ref{SsSucceed} shows that $Z_i$ is not equivalent to any $E_j\in S$. That is, $Z_i\notin [S]$.
%
%
%
\end{proof}

By taking complements, we get the following corollary. Note that there is a slight difference in the statement, as if $S$ is $\Pi^0_3$, then it is not necessarily true that $[S]$ is $\Pi^0_3$.

\begin{corollary}
	If $S = [S]$ is $\Pi^0_3$ and proper on the infinite ceers, then $[S]$ is $\Pi^0_3$-complete.
\end{corollary}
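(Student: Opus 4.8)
The plan is to deduce this from Theorem~\ref{Sigma3CompleteDegreeSets} by passing to complements. Suppose $S = [S]$ is $\Pi^0_3$ and proper on the infinite ceers, and set $S' = \omega \setminus S$. Since $S$ is $\Pi^0_3$, the set $S'$ is $\Sigma^0_3$. The first thing I would check is that $S'$ is itself closed under computable equivalence, i.e.\ $S' = [S']$. Indeed, if $i \in S'$ and $E_i \equiv E_j$ with $j \notin S'$, then $j \in S = [S]$, so by the definition of $[S]$ we would get $i \in [S] = S$, contradicting $i \in S'$. (This is exactly the point of the hypothesis $S = [S]$: for a general $\Pi^0_3$ set $S$ the closure $[S]$ need not be $\Pi^0_3$, so we must start from a set already closed under $\equiv$.)

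Next I would verify that $S'$ is proper on the infinite ceers. Because $S$ is proper on the infinite ceers and $[S] = S$, there are infinite ceers $E_i$ and $E_j$ with $i \in S$ and $j \notin S$; equivalently $j \in S'$ and $i \notin S'$. Since $S' = [S']$, this exhibits an infinite ceer $E_j$ with $j \in [S']$ and an infinite ceer $E_i$ with $i \notin [S']$, so $S'$ is proper on the infinite ceers.

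Now Theorem~\ref{Sigma3CompleteDegreeSets} applies directly to $S'$: it is $\Sigma^0_3$ and proper on the infinite ceers, hence $[S'] = S'$ is $\Sigma^0_3$-complete. Taking complements, $S = \omega \setminus S'$ is $\Pi^0_3$-complete, and since $S = [S]$ this gives the claimed conclusion about $[S]$. There is no real obstacle here beyond this bookkeeping; the only subtlety worth flagging is precisely that the argument breaks if one drops $S = [S]$, since then $S'$ need not be closed under computable equivalence and Theorem~\ref{Sigma3CompleteDegreeSets} would not apply to it.
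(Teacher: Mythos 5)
Your proposal is correct and follows essentially the same route as the paper: take the complement, observe it is $\Sigma^0_3$ and still closed under $\equiv$ and proper on the infinite ceers, apply Theorem~\ref{Sigma3CompleteDegreeSets}, and pass back to complements. The only difference is that you spell out the verification of the hypotheses (closure under equivalence and properness of the complement), which the paper leaves implicit.
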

\begin{proof}
	Let $T$ be the complement of $S = [S]$; then $T$ is $\Sigma^0_3$ and we also have that $T = [T]$. By the theorem, $T = [T]$ is $\Sigma^0_3$-complete, and thus $S = [S]$ is $\Pi^0_3$-complete.
\end{proof}

\begin{corollary}
	For any set $[S]$ which is proper on the infinite ceers, we cannot have $[S]<_m 0'''$ or $[S] <_1 0'''$.
\end{corollary}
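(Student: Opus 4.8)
The plan is to deduce the corollary directly from Theorem~\ref{Sigma3CompleteDegreeSets}, the point being that the theorem leaves no room for $[S]$ to be a ``$\Sigma^0_3$ set which is not $\Sigma^0_3$-complete.'' Recall that $0'''$ is $\Sigma^0_3$-complete under both $\leq_1$ and $\leq_m$, and that $A \leq_m 0'''$ (or even just $A \leq_1 0'''$) implies $A \in \Sigma^0_3$, since $0''' \in \Sigma^0_3$ and $\Sigma^0_3$ is closed downward under many-one reducibility. Recall also that the closure operation is idempotent, $[[S]] = [S]$, and that $[S]$ is again proper on the infinite ceers whenever $S$ is; so Theorem~\ref{Sigma3CompleteDegreeSets} may be applied with $[S]$ itself playing the role of the hypothesis set.

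For the $\leq_m$ assertion, suppose toward a contradiction that $[S] <_m 0'''$. In particular $[S] \leq_m 0'''$, hence $[S] \in \Sigma^0_3$. Since $[S] = [[S]]$ is a $\Sigma^0_3$ set of indices of ceers which is proper on the infinite ceers, Theorem~\ref{Sigma3CompleteDegreeSets} gives that $[S]$ is $\Sigma^0_3$-complete; as $0''' \in \Sigma^0_3$, this yields $0''' \leq_m [S]$. (In fact one gets $0''' \leq_1 [S]$, since the construction in the proof of Theorem~\ref{Sigma3CompleteDegreeSets} produces a $1$-reduction of an arbitrary $\Sigma^0_3$ set to $[S]$.) Together with $[S] \leq_m 0'''$ this gives $0''' \equiv_m [S]$, contradicting $[S] <_m 0'''$. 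The argument for $[S] <_1 0'''$ is identical: $[S] \leq_1 0'''$ gives $[S] \in \Sigma^0_3$, Theorem~\ref{Sigma3CompleteDegreeSets} makes $[S]$ $\Sigma^0_3$-complete, and the $1$-reduction produced there yields $0''' \leq_1 [S]$, whence $0''' \equiv_1 [S]$, a contradiction.

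There is essentially no obstacle here: the entire content sits in Theorem~\ref{Sigma3CompleteDegreeSets}, and the only care needed is in unwinding the definitions of the strict reducibilities. To refute $[S] <_m 0'''$ it suffices to show that $[S] \leq_m 0'''$ forces $0''' \leq_m [S]$ — the case $[S] \not\leq_m 0'''$ is vacuous, since then $[S] <_m 0'''$ already fails — and likewise for $<_1$. Thus the step one should double-check is simply that the reduction built in the proof of Theorem~\ref{Sigma3CompleteDegreeSets} is indeed a $1$-reduction (it is, being the map $i \mapsto$ an index for $Z_i$), so that the $\leq_1$ half of the corollary also goes through.
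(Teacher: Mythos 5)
Your proof is correct and follows essentially the same route as the paper: apply Theorem~\ref{Sigma3CompleteDegreeSets} to $[S]=[[S]]$ to turn $[S]\leq_m 0'''$ (hence $[S]\in\Sigma^0_3$) into $0'''\leq_1 [S]$, contradicting strictness. The only cosmetic difference is that you extract the $1$-reduction from the theorem's construction directly, while the paper gets it by padding the index set; both are fine.
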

\begin{proof}
	If $[S]$ were $\leq_m 0'''$, then in particular it would be $\Sigma^0_3$ and so Theorem \ref{Sigma3CompleteDegreeSets} would give that $[S]$ is $\Sigma^0_3$-complete, showing that $0'''\leq_m [S]$. (Note that $[S] = [[S]]$.) But since $[S]$ is an index set, we can use padding to get $0'''\leq_1 [S]$. Thus if $[S] \leq_m 0'''$, then $0''' \leq_1 [S]$.
\end{proof}

Finally, we have some examples:

\begin{example}
	Each of the following are $\Sigma^0_3$-complete:
	\begin{itemize}
		\item The indices of ceers equivalent to the word problem of a finitely presented group.
		\item The indices of ceers equivalent to the word problem of a finitely generated recursively presented group.
		\item The indices of ceers equivalent to the word problem of a recursively presented group.
	\end{itemize}
In each case, we can computably enumerate the collection of indices for word problems we are considering.
%
\end{example}

%

The following examples were already known:

\begin{example}[\cite{AndrewsSorbi}]
	Each of the following are $\Sigma^0_3$-complete:
	\begin{itemize}
		\item If $E_i$ has infinitely many classes, the set $[i]$ of ceers equivalent to $E_i$;
		\item If $E_i$ has infinitely many classes and is non-universal, the lower cone $\{j \mid E_j \leq E_i\}$;
		\item If $E_i$ has infinitely many classes, the upper cone $\{j \mid E_j \geq E_i\}$.
	\end{itemize}
\end{example}

\begin{example}
	The following are not $\Sigma^0_3$-complete, either because they are not $\Sigma^0_3$ or because they are not proper on the infinite ceers:
	\begin{enumerate}
		\item $[i]$ is $\Pi^0_2$-complete if $E_i$ has one class.
		
		\item $[i]$ is d-$\Sigma^0_2$-complete if $E_i$ has finitely many and more than one classes. 
		
		\item The set of indices for ceers with infinitely many classes is $\Pi^0_3$-complete.
		
		\item The collection of indices for non-universal ceers is a $\Pi^0_3$-complete set. Though this is proper on the infinite ceers, it is not generated by a $\Sigma^0_3$ set.
	\end{enumerate}
\end{example}


Next we aim to show that the hypothesis that $S$ (or $[S]$) is $\Sigma^0_3$ is optimal in Theorem \ref{Sigma3CompleteDegreeSets} (for sets $S$ proper on the infinite ceers). It is immediate from the last example that there are $\Pi^0_3$ sets $S=[S]$ which are thus not $\Sigma^0_3$-complete. We might conjecture for example that all sets $S = [S]$ are either $\Sigma^0_3$-hard or $\Pi^0_3$-hard. We will show that this is not the case. We first give a counterexample which is a $\Delta^0_4$ set $S=[S]$, and then we show that any such set is of the form $[T]$ for some $\Pi^0_3$ set $T$.

In the proof of the following theorem, for a ceer $R$, we will abuse notation and write $[R]$ to mean the set of indices for $R$. 

\begin{theorem}
	There is a $\Delta^0_4$ set $S = [S]$ which is proper on the infinite ceers and such that $S$ is neither $\Sigma^0_3$ nor $\Pi^0_3$-hard.
\end{theorem}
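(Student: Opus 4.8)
I would build $S$ by deciding, for each $\equiv$‑class of infinite ceers, whether that whole class lies in $S$ (no finite ceer will be put in $S$), carrying out the decisions in an $\emptyset'''$‑effective construction so that the final $S$ is $\Delta^0_4$. Since, by Theorem~\ref{Sigma3CompleteDegreeSets} and its corollary, properness already forces $S$ to be properly $\Delta^0_4$, the real content is to also make $S$ neither $\Sigma^0_3$‑ nor $\Pi^0_3$‑hard. Recall that $\{i : E_i$ is finite$\}$ is $\Sigma^0_3$‑complete and $\{i : E_i$ is infinite$\}$ is $\Pi^0_3$‑complete (these serve as the canonical complete sets), so it suffices to defeat, for every partial computable $\varphi_e$, the possibility that $\varphi_e$ is an $m$‑reduction of $\{i : E_i$ finite$\}$ to $S$ (this gives ``not $\Sigma^0_3$‑hard'') and that $\varphi_e$ is an $m$‑reduction of $\{i : E_i$ infinite$\}$ to $S$ (this gives ``not $\Pi^0_3$‑hard''). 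I would list these as requirements $\mathcal P_e$, $\mathcal Q_e$ together with two ``properness'' requirements committing one fixed $\equiv$‑class (say $[\mathrm{Id}]$) to be in $S$ and one ($[R_0]$, $R_0\not\equiv\mathrm{Id}$ infinite) to be out, and run a finite‑injury–style priority construction using $\emptyset'''$ to answer the $\Sigma^0_3$ questions that arise.

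\textbf{Meeting a single requirement.} When $\mathcal P_e$ acts it searches (using $\emptyset'''$) for a ``witnessing'' index $i$: either an index with $E_i$ finite and $\varphi_e(i)$ guaranteed outside $S$ (because $E_{\varphi_e(i)}$ is finite, or its class has already been committed out, or its class is still undecided and can now be committed out), or an index with $E_i$ infinite and $\varphi_e(i)$ guaranteed inside $S$ (its class is committed in, or can now be committed in). Any such $i$ kills $\varphi_e$ as a reduction of $\{$finite ceers$\}$ to $S$; symmetrically for $\mathcal Q_e$ with the roles of ``finite'' and ``infinite'' swapped. The key lemma is that this search always succeeds given that only finitely many classes have been committed by higher‑priority strategies so far: if it failed, then $\varphi_e$ would send $\{$finite ceers$\}$ into a finite union of committed $\equiv$‑classes and $\{$infinite ceers$\}$ into the disjoint complement, yielding a computable $m$‑reduction between the $\Sigma^0_3$‑complete and $\Pi^0_3$‑complete sets of the wrong direction (or a reduction of one of them into a $\Sigma^0_3$ set of the wrong level), which is impossible since $\Sigma^0_3\neq\Pi^0_3$. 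Conflicts are resolved by priority: each strategy protects the finitely many classes it commits, higher priority wins, and the key lemma only ever needs the finitely many classes committed before the strategy acts.

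\textbf{Verification and the main obstacle.} Given the above, $S=[S]$ and properness hold by construction; $S$ is $\Delta^0_4$ because the whole construction is $\emptyset'''$‑effective, provided one arranges the bookkeeping so that membership of a class in $S$ is $\emptyset'''$‑\emph{decidable} (not merely $\emptyset'''$‑c.e.)---e.g.\ by interleaving the construction with an enumeration of $\equiv$‑classes so that the value on the $s$‑th class is sealed at stage $s$, bounding which strategies can still affect it. The statement ``$S=[T]$ for some $\Pi^0_3$ $T$'' does not need to be addressed here, as the excerpt indicates it is obtained afterwards for every such $S$. The delicate point---where I expect to spend the most care---is exactly the interaction between the diagonalization requirements, the properness commitments, and the $\Delta^0_4$ complexity control: one must show the ``search always succeeds'' lemma is not spoiled by classes committed \emph{later} by lower‑priority strategies, which may require a genuine (finite‑)injury analysis in which a $\mathcal P_e$‑ or $\mathcal Q_e$‑strategy is re‑activated when a class it relied on is recommitted, and must show that the resulting changes still stabilize fast enough to keep $S$ at level $\Delta^0_4$. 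Managing this injury while keeping the level‑mismatch argument available is the heart of the proof.
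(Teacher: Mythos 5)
Your construction is essentially the paper's: a $\emptyset'''$-effective stage-by-stage procedure that commits finitely many $\equiv$-classes in or out of $S$, diagonalizes against each candidate reduction $\varphi_e$ by searching for a witness $x$, and derives the existence of that witness from the fact that otherwise a $\Sigma^0_3$-complete (or $\Pi^0_3$-complete) set would reduce into a finite union of classes $[Q]$, each of which is $\Sigma^0_3$, forcing the complete set to be $\Delta^0_3$. (The paper uses a generic $\Sigma^0_3$-complete $U$ and its complement rather than the finite/infinite index sets, but that is cosmetic.) The one point you leave unresolved --- the ``finite-injury analysis'' for classes being recommitted later --- is a non-issue: if you process the requirements one at a time in order, each stage either finds that $\varphi_e$ is already defeated by existing commitments or defeats it by committing the previously uncommitted class of $E_{\varphi_e(x)}$ to the wrong side, and commitments are never revoked. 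Since a requirement, once met, is met by a permanent commitment about a single class, no later (lower-priority) action can injure it, so no priority or reactivation machinery is needed; the witness-search lemma only ever quantifies over the finitely many classes committed \emph{before} the current stage. With that observation your sketch closes up into the paper's proof; the remaining bookkeeping (committing the least undecided index into $S$ at each stage so that membership is $\emptyset'''$-decidable, hence $\Delta^0_4$) is exactly as you describe.
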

\begin{proof}
	We use $0'''$ to construct $S$. Our oracle knows whether or not $E_i \equiv E_j$ and whether a function is total. Fix $U$ a $\Sigma^0_3$-complete set. We write $U^0=U$ and $U^1$ for the complement of $U$.
	
	Begin at stage $0$ by fixing some infinite ceer $Q_0$ and make $Q_0 \in S$, and some infinite ceer $R_0$ and make $R_0 \notin S$. (When we put a ceer in or not in $S$, we put every equivalent ceer in or not in $S$.)
	
	At stage $\la e,k \ra$, $k \in \{0,1\}$, we deal with the potential reduction $\varphi_{e} : U^k \to [S]$, if $\varphi_e$ is total, as follows. There are finitely many $Q_0,\ldots,Q_\ell$ which we have committed to be in $S$, and finitely many $R_0,\ldots,R_m$ which we have committed to not be in $S$.
	
	We search for an $x$ such that one of the following is the case:
	\begin{enumerate}
		\item $\varphi_e(x) \notin [Q_0] \cup \cdots \cup [Q_\ell] \cup [R_0] \cup \cdots \cup [R_m]$;
		\item $x \notin U^k$ and $\varphi_e(x) \in [Q_0] \cup \cdots \cup [Q_\ell]$;
		\item $x \in U^k$ and $\varphi_e(x) \in [R_0] \cup \cdots \cup [R_m]$.
	\end{enumerate}
	Suppose towards a contradiction that there is no such $x$. Then for all $x$, $x\in U^k$ if and only if $\phi_e(x)\in  [Q_0] \cup \cdots \cup [Q_\ell] $. This is a $\Sigma^0_3$-condition. Similarly, $x\in U^{1-k}$ if and only if $\phi_e(x)\in  [R_0] \cup \cdots \cup [R_\ell] $. This is also a $\Sigma^0_3$-condition, so $U$ would be $\Delta^0_3$ contradicting its $\Sigma^0_3$-completeness. So $0'''$ finds such an $x$.
	
	In case (1), if $x \in U^k$, then we commit to $E_{\varphi_e(x)}$ not being in $S$. If $x \notin U^k$, then we commit to $E_{\varphi_e(x)}$ being in $S$. In case (2) or (3), we do not have to take any action.
	
	In all cases, choose the least $E_i$ which we have not yet committed to being in or out of $S$, and commit to it being in $S$. At each stage $\la e,k \ra$ we successfully ensured that $\phi_e$ is not a reduction of $U^k$ to $[S]$, so $[S]$ is neither $\Sigma^0_3$-hard nor $\Pi^0_3$-hard.
\end{proof}

Now we work towards replacing $S$ with a $\Pi^0_3$-set.

\begin{lemma}
	Let $S$ be a $\Sigma^0_2(X)$ set and $S = [S]$. Then there is a $\Pi^0_1(X)$ set $T$ with $[T] = [S] = S$.
\end{lemma}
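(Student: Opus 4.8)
The plan is to exploit the fact that $[T]$ is automatically closed under computable equivalence: this lets us ``spread out'' the leading existential quantifier of a $\Sigma^0_2(X)$ definition of $S$ over infinitely many distinct indices of the same ceer, thereby demoting it to a $\Pi^0_1(X)$ condition.

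Concretely, I would first fix an $X$-computable predicate $\theta$ so that $i \in S \iff \exists n\,\forall m\,\theta(i,n,m)$, and fix (by the padding lemma) an injective computable function $g\colon \omega^2 \to \omega$ with computable range such that $E_{g(i,n)} \equiv E_i$ for all $i,n$ --- for instance by composing the pairing function with a strictly increasing padding function for the uniform sequence $k \mapsto E_{(k)_0}$. Then define
\[ T \;=\; \{\, g(i,n) \;:\; i,n \in \omega,\ \forall m\,\theta(i,n,m) \,\}. \]
To see $T$ is $\Pi^0_1(X)$: from any $k$ one can decide whether $k \in \ran(g)$ and, if so, recover the unique $(i,n)$ with $k = g(i,n)$, so $k \in T$ iff $\forall m\,\bigl[\,k \in \ran(g)\ \wedge\ \theta(g^{-1}(k), m)\,\bigr]$, which is $\Pi^0_1(X)$ (when $k \notin \ran(g)$ the matrix is just false for every $m$). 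For the equality $[T] = S$: if $j \in [T]$, say $E_j \equiv E_k$ with $k = g(i,n) \in T$, then $\forall m\,\theta(i,n,m)$ witnesses $i \in S$, while $E_j \equiv E_k \equiv E_i$, so $S = [S]$ forces $j \in S$; conversely if $i \in S$, pick $n$ with $\forall m\,\theta(i,n,m)$, so $g(i,n) \in T$ and $E_{g(i,n)} \equiv E_i$, whence $i \in [T]$. Since $[T]$ is closed under computable equivalence and equals $S$, we get $[T] = S = [S]$, as required.

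The only point requiring a little care is arranging the padding function $g$ to be injective \emph{and} to have computable range: without computability of $\ran(g)$, the clause ``$k \notin \ran(g)$'' is merely co-c.e., and $T$ would come out $\Sigma^0_2(X)$ rather than $\Pi^0_1(X)$. This is nevertheless routine --- strictly increasing padding of a uniformly c.e.\ sequence is standard, and a strictly increasing computable function $h$ has computable range because $h(k) \ge k$, so $m \in \ran(h) \iff \exists k \le m\,(h(k)=m)$. Everything else is bookkeeping, and the hypothesis $S = [S]$ is used only in the inclusion $[T] \subseteq S$.
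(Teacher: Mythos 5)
Your proof is correct and uses essentially the same idea as the paper: pad each index $i$ into an infinite, computably recognizable family of indices for $E_i$ and let the padding parameter absorb the outer existential quantifier of the $\Sigma^0_2(X)$ definition, with closure under $[\cdot]$ and the hypothesis $S=[S]$ handling the two inclusions. The only difference is presentational --- you define $T$ statically from the $\exists\forall$ normal form, whereas the paper builds the same kind of co-c.e.-in-$X$ set dynamically by removing padded indices $f^t(i)$ according to a $\Sigma^0_2(X)$ approximation.
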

\begin{proof}
	Fix a one-to-one and increasing padding function $f$ with computable range. Then $E_i = E_{f(i)}$.
	
	Let $g$ be a $\Sigma^0_2(X)$ approximation to $S$, i.e., $g$ is an $X$-computable function with
	\[ i \notin S \Longleftrightarrow \exists^\infty s \;\; g(i,s) = 1.\]
	We will define $T$ stage-by-stage. At stage $s$, for each $i < s$ not in the image of $f$, if $g(i,s) = 1$ then remove $E_i,E_{f(i)},\ldots,E_{f^{s}(i)}$ from $T$. (If $g(i,s) = 0$, then do nothing to these elements at this stage.)
	
	If $k \notin S$, then for any $j$ with $E_j \equiv E_k$, fix $i$ such that $j = f^t(i)$ and $i$ is not in the image of $f$. At some stage $t' > t$, we will have $g(i,t') = 1$, and $E_j = E_{f^t(i)}$ will be removed from $T$. Thus $j \notin T$, and since $j$ was arbitrary, $k \notin [T]$.
	
	If $k \in S$, fix $i$ such that $E_i \equiv E_k$ and $i$ is not in the range of $f$. Then for some $t$ sufficiently large, $E_{f^t(i)}$ will never be removed from $T$. Thus $f^t(i) \in T$ and so $k \in [T]$.
\end{proof}

\begin{corollary}
	There is a $\Pi^0_3$ set $T$ which is proper on the infinite ceers such that $[T]$ is neither $\Sigma^0_3$ nor $\Pi^0_3$-hard.
\end{corollary}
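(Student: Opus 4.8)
The plan is to feed the $\Delta^0_4$ set produced by the previous theorem into the previous lemma; almost everything has already been done, and what remains is bookkeeping on quantifier levels. The theorem gives a set $S = [S]$ that is $\Delta^0_4$, proper on the infinite ceers, and neither $\Sigma^0_3$- nor $\Pi^0_3$-hard. Since the relativized arithmetical hierarchy gives $\Delta^0_4 = \Delta^0_2(0'')$, in particular $S$ is $\Sigma^0_2(0'')$, and we still have $S = [S]$. Thus the hypotheses of the lemma are met with $X = 0''$.

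Applying the lemma with $X = 0''$ yields a $\Pi^0_1(0'')$ set $T$ with $[T] = [S] = S$. Because $\Pi^0_1(0'') = \Pi^0_3$, the set $T$ is $\Pi^0_3$. It then remains only to transfer the other two properties across the equation $[T] = S$. Properness on the infinite ceers depends only on a set's closure under computable equivalence, so $[T] = [S]$ being proper on the infinite ceers means both that $[T]$ is proper and---by the very definition---that $T$ itself is proper on the infinite ceers. Finally, $[T] = S$ is neither $\Sigma^0_3$-hard nor $\Pi^0_3$-hard because $S$ is not, and that is exactly the conclusion we want.

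I do not expect any real obstacle at this stage: the genuine content---the $0'''$-construction that diagonalizes against every candidate reduction $\varphi_e \colon U^k \to [S]$, and the padding-and-removal device that lowers the complexity of an index set while preserving its closure---has already been carried out in the theorem and the lemma. The one thing to be careful about is the counting of quantifiers, namely that a $\Delta^0_4$ predicate is $\Delta^0_2$ relative to $0''$ and that a $\Pi^0_1(0'')$ predicate collapses to $\Pi^0_3$; both follow immediately from the relativization identity $\Sigma^0_n(0^{(k)}) = \Sigma^0_{n+k}$ for $n \geq 1$.
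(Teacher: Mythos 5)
Your proposal is correct and follows exactly the paper's route: apply the lemma with $X = 0''$ to the $\Delta^0_4$ set $S=[S]$ from the theorem, obtaining a $\Pi^0_1(0'')=\Pi^0_3$ set $T$ with $[T]=[S]=S$, from which properness and non-hardness transfer immediately. The paper's own proof is just a terser version of the same argument, leaving the quantifier bookkeeping implicit.
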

\begin{proof}
	Let $S$ be the $\Delta^0_4$ set from the previous theorem. The previous Lemma gives a $\Pi^0_3$ set $T$ with $[T] = [S] = S$.
\end{proof}

\bibliographystyle{plain}
\bibliography{Zzyzx}

\end{document}